\renewcommand{\p@subsection}{\thesection.}
\providecommand{\phantomsection}{}% In case hyperref is not loaded
\newcommand{\mylabel}[2]{\raisebox{.7\normalbaselineskip}{\phantomsection}#1%
  \def\@currentlabel{#1}\textlabel{#2}}
\newcommand{\mylabelrev}[2]{\raisebox{.7\normalbaselineskip}{\phantomsection}%
  \def\@currentlabel{#1}\textlabel{#2}}
\newcounter{eqsign}
\newtheorem{thm}{Theorem}
\newtheorem{prop}[thm]{Proposition}
\theoremstyle{definition}  }
\theoremstyle{remark} }
\newtheorem{bigclm}[thm]{Claim}
{\theoremstyle{remark} }
 \newcommand{\RR}{\mathbb{R}}
\newcommand{\NN}{\mathbb{N}}
\newcommand{\disp}[0]{\displaystyle}
\DeclarePairedDelimiter{\bracepair}{\lbrace}{\rbrace}
\DeclarePairedDelimiter{\parenpair}{(}{)}
\newcommand{\declare}{:=}
\DeclareMathOperator{\mes}{mes}
\newcommand{\dmes}[1]{\mes_{#1}}
\DeclareMathOperator{\grad}{grad}
\newcommand{\analy}[1]{\mathcal{A}(#1)}
\DeclareMathOperator{\dist}{dist}
\newcommand{\disty}[2]{\dist \parenpair*{#1, #2}}
\newcommand{\bigder}{\mathcal{D}}
\begin{document}
\title[Zero-Sets of Real Analytic Functions]{The zero set of a real analytic function}
\author{Boris S. Mityagin}
\address{231 West 18th Avenue, The Ohio State University, Columbus, OH 43210}
\email{\url{mityagin.1@osu.edu}}
%\date{\today}

\begin{abstract}
A brief proof of the statement that the zero-set of a nontrivial real-analytic function in $d$-dimensional space has zero measure is provided.
\end{abstract}
\maketitle
\normalem

Recently, in postings \cite{Dang,Kuchment} the authors face the problem of finding a good reference to the proof (or even an explicit statement) of the following.

\begin{prop} \label{prop:zerosetzeromeas}
Let $A(x)$ be a real analytic function on (a connected open domain $U$ of) $\RR^d$.  If $A$ is not identically zero, then its zero set 
\begin{equation} \label{eq:zerosetdef}
F(A) \declare \bracepair{x \in U: A(x) = 0}
\end{equation}
has a zero measure, i.e., $\dmes{d} F(A) = 0$.
\end{prop}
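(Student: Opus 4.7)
The plan is to argue by induction on the dimension $d$, reducing the $d$-dimensional case to the $(d-1)$-dimensional one via Fubini's theorem and the classical fact that a nontrivial real analytic function of one variable on an interval has only isolated zeros.

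The base case $d = 1$ I would dispatch by showing that each zero $x_0$ of $A$ is isolated: not all Taylor coefficients of $A$ at $x_0$ can vanish (otherwise the set of common zeros of $A$ and of all its derivatives would be nonempty, open, and closed in the connected set $U$, forcing $A \equiv 0$), so $A(x) = (x - x_0)^{n_0} g(x)$ with $g(x_0) \neq 0$. Thus $F(A)$ is at most countable, hence null.

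For the inductive step I assume the proposition in dimension $d - 1$, take $A \not\equiv 0$ real analytic on the connected open set $U \subset \RR^d$, and note that by second-countability of $U$ it is enough to produce, for each $x_0 \in U$, a neighborhood $V$ with $\dmes{d}(F(A) \cap V) = 0$. Fix such an $x_0$ and expand $A$ in a Taylor series there. The multidimensional identity theorem — the same open-and-closed argument, now applied to the set of points where all partial derivatives of $A$ vanish — forces some Taylor coefficient to be nonzero, so the lowest-order nonzero homogeneous part
\[
P_n(v) \;=\; \sum_{|\alpha| = n} c_\alpha v^\alpha \;=\; \frac{1}{n!}\,(v \cdot \nabla)^n A(x_0)
\]
is a nonzero polynomial in $v \in \RR^d$, and therefore admits some direction $v$ with $P_n(v) \neq 0$. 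Completing $v$ to a basis furnishes new coordinates $y = (y', y_d)$ near $x_0$ in which $\partial^n A / \partial y_d^n\,(x_0) \neq 0$.

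I would finish by slicing. By continuity, $\partial^n A / \partial y_d^n$ is nonzero on some product box $V = V' \times (a, b)$ around $x_0$. For each fixed $y' \in V'$, the one-variable function $y_d \mapsto A(y', y_d)$ is real analytic on $(a, b)$ with nowhere-vanishing $n$-th derivative, hence not identically zero; the base case applied on this slice yields a one-dimensional zero set of measure zero, and Fubini's theorem gives $\dmes{d}(F(A) \cap V) = 0$. I expect the principal obstacle to lie in the \emph{choice of good direction} step preceding this Fubini argument: a naive slicing in the original coordinates can fail because a positive-measure set of slices might be directions along which $A$ vanishes identically, and rotating coordinates so that one pure directional derivative of $A$ survives on an entire neighborhood is precisely what rules out that degeneracy.
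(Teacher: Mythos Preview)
Your argument is correct, but it follows a different route from the paper's.  You slice along a well-chosen direction and invoke Fubini together with the one-variable case; the paper deliberately avoids Fubini (and Lebesgue theory altogether).  Instead, for any $B\in\analy{U}$ it considers the set $G(B)=\{x:B(x)=0,\ \grad B(x)\neq 0\}$, shows via the Implicit Function Theorem that $G(B)$ is locally a $(d-1)$-dimensional graph and hence $\dmes{d}G(B)=0$, and then proves the covering
\[
F(A)\ \subseteq\ \bigcup_{t\in\NN^d} G(\bigder^t A),
\]
which follows because at every $w\in F(A)$ some derivative $\bigder^s A(w)$ is the first not to vanish.  The countable union of null sets is null, and one never leaves the elementary notion ``measure zero''.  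What the paper's approach buys is that it needs no integration theory, immediately upgrades to the Hausdorff-dimension bound $\dim_H F(A)\le d-1$, and extends verbatim to quasi-analytic classes; what your approach buys is a short, self-contained argument once Fubini is on the table, and a structure (the Weierstrass-preparation flavored ``good direction'') that many readers will recognize.  One small remark: your induction hypothesis in dimension $d-1$ is never actually invoked---you use only the base case $d=1$ on each slice---so the proof is really a direct reduction to one variable rather than an induction.
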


Remark 5.23 in \cite{Kuchment} and Lemma 1.2 in \cite{Dang} list possible approaches to the proof --- from using Fubini's theorem to Hironaka's resolution of singularities.  This posting suggests a proof on the level of Calculus II.  

For any $B \in \analy{U}$, the space of real-valued analytic functions on $U$, let us introduce the set
\begin{equation} \label{eq:zero}
G(B) \declare \bracepair{x \in F(B): \vert (\grad B)(x) \vert \neq 0}.
\end{equation}

\begin{bigclm} \label{clm:zerosetnonzeroderzeromeas} If $B \in \analy{U}$, then 
\begin{equation} \label{eq:measnonzerograd}
\dmes{d} G(B) = 0
\end{equation}
\end{bigclm}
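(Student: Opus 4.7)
The plan is to show that $G(B)$ is locally a piece of a smooth hypersurface, hence a countable union of measure-zero sets. The key tool is the implicit function theorem, which is exactly the ``Calculus II'' device the paper advertises.

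Fix $x_{0}\in G(B)$. By definition there exists an index $i\in\{1,\dots,d\}$ with $\partial_{i}B(x_{0})\neq 0$. Since $B$ is real analytic (in particular $C^{1}$), the implicit function theorem produces an open box $V=V'\times I\ni x_{0}$, with $V'\subset\RR^{d-1}$ an open neighborhood of the other coordinates and $I\subset\RR$ an open interval, together with a $C^{1}$ function $\varphi\colon V'\to I$, such that
\[
\{x\in V:B(x)=0\}=\{(y,\varphi(y)):y\in V'\},
\]
after relabelling coordinates so that $i=d$. Thus $F(B)\cap V$, and in particular $G(B)\cap V$, lies in the graph of the $C^{1}$ function $\varphi$. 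The graph of a $C^{1}$ (indeed, merely continuous) function $V'\to\RR$ has $d$-dimensional Lebesgue measure zero by Fubini: each vertical fibre is a single point.

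To finish, I would use second countability of $\RR^{d}$ (equivalently, of $U$). The collection of open boxes $V$ constructed above covers $G(B)$ as $x_{0}$ ranges over $G(B)$, and from this cover one can extract a countable subcover $\{V_{k}\}_{k\geq 1}$. Then
\[
\dmes{d}G(B)\leq \sum_{k\geq 1}\dmes{d}\bigl(F(B)\cap V_{k}\bigr)=0,
\]
which proves \eqref{eq:measnonzerograd}.

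There is no real obstacle here: the only point that needs a moment's care is making sure the local argument (implicit function theorem) can be globalised, which is handled cleanly by the Lindelöf property of the open set $U\subset\RR^{d}$. Note that analyticity of $B$ is not yet being used in any essential way beyond $C^{1}$ regularity; the full strength of analyticity will enter only in the subsequent argument that reduces Proposition \ref{prop:zerosetzeromeas} to this claim (by showing that on the complement of $G(A)$ one can induct on $d$ using a nontrivial derivative of $A$).
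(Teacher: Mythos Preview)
Your proof is correct and follows essentially the same route as the paper's: the implicit function theorem locally flattens $G(B)$ onto a piece of a $(d-1)$-dimensional graph, and a countability argument globalizes. The only cosmetic differences are that the paper exhausts $G(B)$ by compact sets $G_k$ (obtaining \emph{finite} subcovers) rather than invoking Lindel\"of, and it deliberately avoids Fubini in favor of the bare definition of ``measure zero'' (cf.\ remark~(i)); as an aside, your preview of the subsequent step as ``induction on $d$'' is not what the paper does --- it instead shows directly that $F(A)\subseteq\bigcup_{t\in\NN^{d}}G(\bigder^{t}A)$.
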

\begin{proof}
Indeed, $G(B)$ is a union of the sets
\begin{equation*} %\label{eq:measnonzerogradpieces}
\begin{split}
G_k &\declare \left\lbrace x \in F(B): \vert x \vert \leq k, \right.\\
& \qquad \left. \vert (\grad B)(x) \vert \geq \disp \frac{1}{k} \, , \quad \disty{x}{U^{\complement}} \geq \frac{1}{k} \right\rbrace .
\end{split}
\end{equation*}
[The third condition is omitted if $U \equiv \RR^d$, i.e., $U^{\complement} = \varnothing$.]  If $B \equiv 0$, all sets $G_k$ are empty.  Otherwise, for these compact sets, by the Implicit Function Theorem (e.g., \cite[pp. 195--201, Thm. 9.18]{Rudin}, $n = 1$) any point $y \in G_k$ has a ball/neighborhood where a piece of $F(A)$ is in a coordinate $(d-1)$--dimensional subspace of a nice coordinate system.  Choose a proper finite covering and get
\begin{equation}
\dmes{d} G_k = 0, \quad \forall k.
\end{equation}
So \eqref{eq:measnonzerograd} follows.  
\end{proof}

\begin{bigclm} \label{clm:faembed}
Assume
\begin{equation} \label{eq:inithyp} 
A \in \analy{U} \text{ is not identically zero}.
\end{equation}
Then $F(A)$ is contained in a countable union of sets of type $G(B)$.  
\end{bigclm}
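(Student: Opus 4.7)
The plan is to exhibit $F(A)$ as a countable union of sets of the form $G(\partial^\alpha A)$, indexed by multi-indices $\alpha \in \NN^d$. Since $\NN^d$ is countable and each $\partial^\alpha A$ lies in $\analy{U}$, this collection is of the required type and cardinality.

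The single nontrivial input is the \emph{identity principle for real-analytic functions on a connected open set}: if every partial derivative $\partial^\alpha A$ vanished at some point $x_0 \in U$, the Taylor series of $A$ centered at $x_0$ would be identically zero, forcing $A \equiv 0$ in a neighborhood of $x_0$. The set of such ``flat'' points is then both open (by the Taylor argument) and closed (as the intersection of the closed zero-sets of the continuous functions $\partial^\alpha A$) in the connected $U$, so, if nonempty, it would have to equal all of $U$, contradicting \eqref{eq:inithyp}. This is the one place where real-analyticity (as opposed to mere $C^\infty$) is essential, and I expect it to be the main conceptual obstacle; everything else reduces to bookkeeping.

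Granted this, fix any $x \in F(A)$ and choose a multi-index $\gamma$ of \emph{minimal} length with $(\partial^\gamma A)(x) \neq 0$. Because $A(x) = 0$, we have $|\gamma| \geq 1$, so we may write $\gamma = \beta + e_j$ for some coordinate direction $1 \leq j \leq d$ and some $\beta$ with $|\beta| = |\gamma| - 1$. By minimality of $|\gamma|$, every partial derivative of $A$ of order at most $|\beta|$ vanishes at $x$; in particular $(\partial^\beta A)(x) = 0$. Meanwhile
\[
\partial_j (\partial^\beta A)(x) = (\partial^\gamma A)(x) \neq 0,
\]
so $\vert \grad(\partial^\beta A)(x) \vert \neq 0$. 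Taking $B \declare \partial^\beta A \in \analy{U}$, this shows $x \in G(B)$.

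Combining the two steps, we obtain $F(A) \subseteq \bigcup_{\beta \in \NN^d} G(\partial^\beta A)$, which is the desired countable union of sets of type $G(B)$. In combination with Claim~\ref{clm:zerosetnonzeroderzeromeas}, this will immediately yield Proposition~\ref{prop:zerosetzeromeas} by countable subadditivity of Lebesgue measure.
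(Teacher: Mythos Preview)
Your proof is correct and follows essentially the same argument as the paper: pick, for each $x\in F(A)$, a multi-index of minimal order at which some derivative of $A$ is nonzero, drop one coordinate to get $\beta$, and conclude $x\in G(\partial^\beta A)$, yielding $F(A)\subseteq\bigcup_{\beta\in\NN^d}G(\partial^\beta A)$. The only difference is that you spell out the open--closed connectedness argument for the identity principle, which the paper leaves implicit in its appeal to \eqref{eq:inithyp}.
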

\begin{proof}
Indeed, for any $w \in F(A)$ there exists an integer $n \geq 0$ such that
\begin{subequations} \label{eq:realanaly}
\begin{align}
(\bigder^t A)(w) = 0, \quad \forall t \in \NN^d, \quad \NN = \bracepair{0, 1, 2, \dotsc}, \quad \text{if }\vert t \vert \leq n \label{eq:manyzero}\\
\intertext{but}
(\bigder^s A)(w) \neq 0 \text{ for some } s \in \NN^d, \quad \vert s \vert = n + 1. \label{eq:notallzero}
\end{align}
\end{subequations}
Otherwise, all derivatives $(\bigder^p A)(w)$, $p \in \NN^d$, would vanish at $w$ in contradiction to \eqref{eq:inithyp}.  As usual, we put $\disp 
\bigder^t \declare \frac{\partial^{\vert t \vert}}{\partial x_1^{t_1}\partial x_2^{t_2} \dotsc \partial x_{d}^{t_d}}$, 
and $\vert a \vert = \vert a_1 \vert + \dotsb + \vert a_d \vert$ for any $a \in \RR^d$.  We also define $e_j$ to be the $j$th standard basis vector, i.e.,
\[
e_j \declare ( 0, 0, \dotsc, 0, \overbrace{1}^{j - \text{th slot}}, 0, \dotsc, 0) \in \RR^d.
\]

Notice that
\begin{equation}
\left( \grad (\bigder^p B) \right)(x) = \bracepair{(\bigder^{p + e_j} B)(x), \, \, 1 \leq j \leq d}
\end{equation}
for any $p \in \NN^d$.  Therefore, by \eqref{eq:manyzero}, \eqref{eq:notallzero}
\begin{equation*}
w \in G(\bigder^q A) \quad \text{ for some } \quad q, \quad \vert q \vert = n;
\end{equation*}
it proves that
\begin{equation} \label{eq:fainclude}
F(A) \subseteq \bigcup_{t \in \NN^d} G(\bigder^t A).
\end{equation}
\end{proof}
By Claim~\ref{clm:zerosetnonzeroderzeromeas} all sets on the right side of \eqref{eq:fainclude} have $\dmes{d}$ -- zero and by Claim~\ref{clm:faembed}
\[
\dmes{d} F(A) = 0.
\]
Proposition~\ref{prop:zerosetzeromeas} is proved.  

In conclusion, let us make a few remarks.
\begin{enumerate}[label=(\roman*)]
\item Curiously enough, the above proof does not use Lebesgue measure theory (Fubini's Theorem or whatever else).  We talk only about sets of measure zero; by definition,
\begin{equation*}
\dmes{d} C = 0 \quad \text{ for } \quad C \subseteq \RR^d
\end{equation*}
if for any $\epsilon > 0$ there exists a system $\disp \bracepair*{B = B(y; \delta_y)}_{y \in Y}$ of balls in $\RR^d$ such that
\begin{equation*} %\label{eq:zeromeasconds}
\sum_y \delta(y)^d < \epsilon \quad \text{ and } \quad C \subseteq \bigcup_{y \in Y} B(y, \delta(y)).
\end{equation*}
\item Along the same lines we can explain (prove)
\begin{prop}
Under the assumptions of Proposition~\ref{prop:zerosetzeromeas}, the Hausdorff dimension of $F(A)$ does not exceed $(d-1)$.  
\end{prop}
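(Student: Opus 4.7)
The plan is to imitate the proof of Proposition~\ref{prop:zerosetzeromeas} step by step, replacing the $d$-dimensional measure zero assertion by a Hausdorff dimension bound. I will use two standard facts: Hausdorff dimension is countably stable (the Hausdorff dimension of a countable union equals the supremum of the dimensions of the pieces), and Lipschitz maps do not increase Hausdorff dimension, so the image of a subset of $\RR^{d-1}$ under a Lipschitz map has Hausdorff dimension at most $d-1$.

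Claim~\ref{clm:faembed} already writes $F(A) \subseteq \bigcup_{t \in \NN^d} G(\bigder^t A)$, a countable union. By countable stability, it therefore suffices to prove that $\dim_H G(B) \leq d-1$ for every $B \in \analy{U}$. Each such $G(B)$ is itself the countable union $\bigcup_k G_k$ of the compact sets introduced in the proof of Claim~\ref{clm:zerosetnonzeroderzeromeas}, so by countable stability again it is enough to bound $\dim_H G_k \leq d-1$ for each fixed $k$.

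For fixed $k$ the proof of Claim~\ref{clm:zerosetnonzeroderzeromeas} already produced, via the Implicit Function Theorem and compactness, a finite cover of $G_k$ by open boxes $V_1, \ldots, V_N$ such that on each $V_i$ the set $F(B) \cap V_i$ is, after a permutation of coordinates, the graph of an analytic function $\phi_i \colon W_i \to \RR$ over a $(d-1)$-dimensional box $W_i \subseteq \RR^{d-1}$. Shrinking each box slightly so that $\overline{W_i}$ is compact, the graphing map $w \mapsto (w, \phi_i(w))$ is $C^1$ on $\overline{W_i}$ and hence Lipschitz there. Thus $G_k \cap V_i$ is a Lipschitz image of a subset of $\RR^{d-1}$ and so $\dim_H (G_k \cap V_i) \leq d-1$. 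Taking the finite union over $i$ and then the countable unions over $k$ and $t$ completes the argument.

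I do not anticipate a serious obstacle: the structural work is already done in Proposition~\ref{prop:zerosetzeromeas}, and the upgrade to a Hausdorff dimension bound is purely a matter of replacing the ``volume'' estimate on each coordinate graph by the standard fact that a Lipschitz graph over $\RR^{d-1}$ has Hausdorff dimension at most $d-1$. If one wishes to stay on the elementary level of the original proof and avoid quoting these facts, one can instead cover each $W_i$ by $(d-1)$-cubes of sidelength $\delta$, wrap each resulting piece of the graph by a single Euclidean ball of radius comparable to $\delta$ (the constant depending only on the Lipschitz norm of $\phi_i$), and verify directly, through the definition of $(d-1+\epsilon)$-dimensional Hausdorff measure recalled in remark~(i), that this measure vanishes for every $\epsilon > 0$.
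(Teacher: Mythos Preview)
Your proposal is correct and matches the paper's own treatment: the paper does not give a detailed argument for this proposition but simply remarks that it can be proved ``along the same lines'' as Proposition~\ref{prop:zerosetzeromeas}. Your write-up---reusing the inclusion of Claim~\ref{clm:faembed}, the exhaustion $G(B)=\bigcup_k G_k$ from Claim~\ref{clm:zerosetnonzeroderzeromeas}, and then replacing the $\dmes{d}$-zero estimate on each implicit-function graph by the standard fact that a Lipschitz graph over a piece of $\RR^{d-1}$ has Hausdorff dimension at most $d-1$, combined with countable stability of $\dim_H$---is precisely that indicated route, carried out in full.
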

\item The statements in Proposition~\ref{prop:zerosetzeromeas} and Claims~\ref{clm:zerosetnonzeroderzeromeas} and \ref{clm:faembed}, in particular the inclusion \eqref{eq:fainclude}, remain valid if we assume that $A$ is a real function of some quasi-analtyic class, for example, 
\begin{equation*}
\begin{split}
A \in C_M(U) \declare & \left\lbrace f \in C^{\infty}(U): \vert (\bigder^t f)(x) \vert \leq C_K R_K^{\vert t \vert} M(\vert t \vert), \quad x \in K, \right. \\
& \left. \qquad \forall \text{ compact } K \subset U \right\rbrace,
\end{split}
\end{equation*}
where $M = \bracepair{M(n): n \in \NN}$, $M(0) = 1$, $M(n) \nearrow \infty$, $\log M(n)$ is convex, and 
\[
\sum_{n = 1}^{\infty} \frac{M(n)}{M(n + 1)} = \infty.  
\]
\end{enumerate}

\nocite{*}
\printbibliography

\end{document}